\documentclass[letterpaper,12pt]{article}
\usepackage{amsmath, amsfonts,amssymb,latexsym,amsthm}
\usepackage{fullpage}
\usepackage{parskip}
\usepackage{graphicx}
\usepackage{mathtools}
\usepackage{wasysym}
\usepackage{caption}

\newtheoremstyle{mystyle}
  {\topsep} 
  {\topsep} 
  {} 
  {} 
  {\bfseries} 
  {.} 
  {.5em} 
  {} 
	
\theoremstyle{mystyle}
\newtheorem{theorem}{Theorem}[section]
\newtheorem{definition}{Definition}[section]

\newtheorem{corollary}{Corollary}[section]
\newtheorem{conjecture}{Conjecture}[section]
\newtheorem{question}{Question}[section]
\newtheorem{lemma}{Lemma}[section]

\begin{document}
\title{Dynamics of piecewise translation maps}
\author{Sang Truong \\ University of California, Irvine}
\date{\today}

\maketitle{}
\bigskip

\begin{abstract}
In this paper, we introduce a generalized piecewise translation map on the Euclidean space. We provide a special case when this map is always of finite type. For a finite type map in this case, we form conjectures on the semi-continuity of the attractor. Moreover, we also provide some conjectures and pictures on piecewise translations on a two dimensional torus.                  
\end{abstract}

\section{Background}
The goal of this paper is to study the dynamics of piecewise translation maps. These maps are special types of piecewise isometries on a region $\Omega$ in the Euclidean space. To define piecewise isometries, we partition $\Omega$ into smaller regions $P_0, \cdots, P_m$ such that these pieces intersect only at their boundaries. We also assume that the boundary of each piece has zero Lebesgue measure. A map $f: \Omega \to \Omega$ is called a \emph{piecewise isometry} if the restriction of $f$ to any piece $P_i$ is an isometry, i.e. it preserves distance on each piece. Piecewise isometries, especially piecewise translation maps, serve as models for many engineering applications, such as the second order digital filter \cite{CL, D}, the sigma-delta modulator \cite{ADF, D1, F}, the buck converter \cite{D, D2}, the three-capacitance model \cite{SAO}, error diffusion in digital printing \cite{AKMPST, Adler2012}, and machine learning.\\

One of the simplest examples of such piecewise translation can be formed using \textit{interval exchange transformations} (IET). These transformations are a generalization of a classical dynamical system -- circle rotation. To define an IET, let $\Omega = [0,1) \subseteq \mathbb{R}$ be partitioned into a finite number of distinct intervals. The map $f:\Omega \to \Omega$ that permutes the intervals defines a piecewise isometry. 

 \begin{figure}
	\centering
		\includegraphics{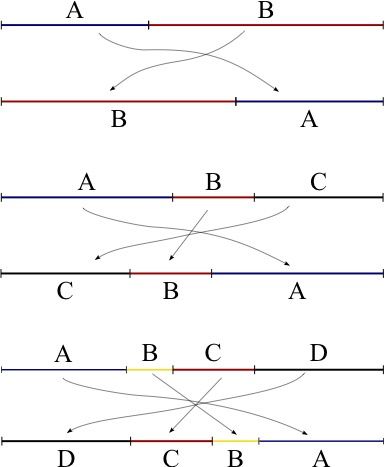} \caption{Interval Exchange Transformation (IET)\label{f.1}}
		\label{fig:iet_example}
	\end{figure}

Several nice properties of IET have been discovered. In particular, M. Kean \cite{K} showed that every orbit of $f$ is dense in $[0,1)$. He also proved that if the partition has only two or three intervals, then all minimal IETs are also uniquely ergodic. The statement is not necessarily true if the number of intervals exceeds 3. However, it was proven by Masur \cite{M} and Veech \cite{Ve} that almost every interval exchange transformation is uniquely ergodic. This motivates a generalized conjecture that most piecewise isometries (restriction to the attractors) are also uniquely ergodic.

We can also consider a variation of interval exchange transformations, namely \textit{interval translation mappings} (ITM). These mappings are introduced by M. Boshernitzan and I. Kornfeld in 1995 \cite{BK}. For ITM, the images of the intervals are not required to be disjoint. We classify ITM into two types: finite and infinite. In the case $f(\Omega) \neq \Omega$, consider the sequence $\Omega_0 = \Omega, \; \Omega_1 = f(\Omega_0), \ldots$. From this, we get the nested chain $\Omega_0 \supseteq \Omega_1 \supseteq \ldots$. If this chain stabilizes we say the map $f: \Omega \to \Omega$ is of finite type; otherwise, $f$ is of infinite type. We see that the map can only be of infinite type if there are more than two elements in the partition. In particular, Boshernitzan and Kornfield constructed an ITM of infinite type with three branches \cite{BK}.

Another class of one dimensional piecewise isometries is the double rotations of the circle. Let $\Omega = S^1=[0,1]/_{\{0\,\sim\,1\}}$ be partitioned into two arcs $P_0$ and $P_1$. We define the double rotation map $f:S^1 \to S^1$ as following:
$$
f(x)=\left\{
       \begin{array}{ll}
         x+\alpha_0 \;(\text{mod}\ 1), & \hbox{if $x\in P_0$;} \\
         x+\alpha_1 \;(\text{mod}\ 1), & \hbox{if $x\in P_1$.}
       \end{array}
     \right.
$$

Almost all double rotations of the circle are uniquely ergodic, even most of those of infinite type \cite{BC}. We would like to emphasize that this text is the result of an undergraduate research project under the guidance of Prof. Gorodetski, based on the original ideas and results from \cite{GVW} and \cite{V2}. The pictures were inspired by those from \cite{GVW} and \cite{V2}, but were recreated by independent reproduction of their numerical experiments. \\

\section{General set-up}
Let $\Omega$ be a compact set in the Euclidean space $\mathbb{R}^d$. Suppose $\Omega = \bigcup_{i=0}^{m-1}B_i$ where $B_i$'s are compact for all $i = 0, 1, \ldots, m-1$. Here we allow the subsets $B_i$'s to have non-trivial intersections. We define a translation map on each $B_i$'s by $T_i(x) = x + v_i$, where $v_i$ is a vector in $\mathbb{R}^d$. Let $\mathcal{X}$ be the space of all compact subsets of $\Omega$. \\

\begin{definition}
The map $F : \mathcal{X} \to \mathcal{X}, \,\, F(K) = \bigcup_{i=0}^{m-1}T_i(K \cap B_i)$ is called a \emph{piecewise translation of m branches} on $\Omega$.
\end{definition}

 \begin{figure}
	\centering
		\includegraphics[scale=3]{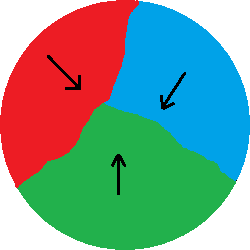}
		\caption{Piecewise translation of 3 branches on a 2-dimensional disk.}
		\label{fig:disk_example}
	\end{figure}
	
Each $B_i$ is called a branch of $\Omega$. Locally, if $x$ is in both $B_i$ and $B_j$, then $x$ will be translated by both $v_i$ and $v_j$. Figure \ref{fig:disk_example} represents a piecewise translation of 3 branches on a 2-dimensional disk. These three branches intersect only at their boundaries, which have zero Lebesgue measure. Figure \ref{fig:disk_example} is a simplified model of machine learning. In order for the machine to learn, we need multiple inputs from the environment. Each input is represented by a translation vector. The process of machine learning is modeled by the dynamics of applying the piecewise translation map many times. Therefore, it is natural to define what we get after applying the map over and over again, which we will call the \emph{attractor}.\\
\begin{definition}
	The set $A = \bigcap_{n=1}^{\infty}F^n(\Omega)$ is called the \emph{attractor} of the map $F$. If $A = F^N(\Omega)$ for some $N \in \mathbb{N}$, then we say the map $F$ (or the attractor $A$) is of \emph{finite type}. Otherwise, $F$ is of \emph{infinite type}.
\end{definition}
     
Because $\{F^n(\Omega)\}_{n=1}^{\infty}$ is a non-increasing sequence of non-empty compact sets, it has a non-empty intersection. Therefore, the attractor is always non-empty and compact. Moreover, the attractor should be invariant under the map $F$. This expected property is a direct consequence of the following lemma.\\

\begin{lemma} \label{invariant_attractor_lemma}
Let $K_0 \supseteq K_1 \supseteq \ldots$ is a non-increasing sequence of non-empty compact sets in $\mathcal{X}$, then $F\left(\bigcap_{n=0}^{\infty}K_n\right) = \bigcap_{n=0}^{\infty}F(K_n)$.
\end{lemma}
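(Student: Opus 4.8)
The plan is to prove the two inclusions separately: the inclusion $F\left(\bigcap_{n=0}^{\infty}K_n\right) \subseteq \bigcap_{n=0}^{\infty}F(K_n)$ is routine and follows from monotonicity of $F$, while the reverse inclusion is where the finite-branch structure of the construction is essential. First I would record the monotonicity of $F$: if $K \subseteq L$ are compact subsets of $\Omega$, then $K \cap B_i \subseteq L \cap B_i$ for each $i$, hence $T_i(K \cap B_i) \subseteq T_i(L \cap B_i)$, and taking the union over $i$ gives $F(K) \subseteq F(L)$. Since $\bigcap_{n} K_n \subseteq K_p$ for every $p$, monotonicity yields $F\left(\bigcap_{n} K_n\right) \subseteq F(K_p)$ for every $p$, and intersecting over $p$ gives the first inclusion.

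For the reverse inclusion, I would take a point $y \in \bigcap_{n=0}^{\infty} F(K_n)$ and analyze which branch realizes it. For each $n$, since $y \in F(K_n) = \bigcup_{i=0}^{m-1} T_i(K_n \cap B_i)$, there is an index $i_n \in \{0, \ldots, m-1\}$ with $y - v_{i_n} \in K_n \cap B_{i_n}$. Because only $m$ indices are available, the pigeonhole principle produces an index $i^\ast$ and an infinite subsequence $n_1 < n_2 < \cdots$ along which $i_{n_k} = i^\ast$, so that $y - v_{i^\ast} \in K_{n_k} \cap B_{i^\ast}$ for all $k$. Now fix an arbitrary $p \in \N$; choosing $k$ with $n_k \geq p$ and using $K_{n_k} \subseteq K_p$ gives $y - v_{i^\ast} \in K_p$. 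As $p$ was arbitrary, $y - v_{i^\ast} \in \bigcap_{n} K_n$, and since $y - v_{i^\ast} \in B_{i^\ast}$ as well, we conclude $y = T_{i^\ast}(y - v_{i^\ast}) \in T_{i^\ast}\!\left(\left(\bigcap_{n} K_n\right) \cap B_{i^\ast}\right) \subseteq F\left(\bigcap_{n} K_n\right)$.

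The main obstacle, and the only subtle point, is that the branch index $i_n$ witnessing $y \in F(K_n)$ may a priori depend on $n$, so one cannot simply pass to the limit inside a single $T_i$; the finiteness of the number of branches is exactly what rescues the argument via pigeonhole, and it is the hypothesis I expect to be indispensable. I would also note that compactness of the $K_n$ is not actually used in this particular argument — it is needed only to guarantee that $F$ maps $\mathcal{X}$ into $\mathcal{X}$ — and that the same proof shows $F$ commutes with arbitrary nested intersections, which is all that is needed to deduce that the attractor $A = \bigcap_{n\ge 1} F^n(\Omega)$ satisfies $F(A) = A$.
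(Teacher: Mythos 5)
Your proof is correct and follows essentially the same route as the paper: the paper forms the finite set $H=\{y\in\Omega \mid x\in F(\{y\})\}$ of candidate preimages (one point $x-v_i$ per branch) and uses the nestedness of $H\cap K_n$ to find a single preimage lying in every $K_n$, which is exactly your pigeonhole argument on the branch indices. Your side remark that compactness of the $K_n$ is not really needed here is also consistent with the paper's argument, since the relevant sets $H\cap K_n$ are finite.
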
   

\begin{proof}
	The inclusion $F\left(\bigcap_{n=0}^{\infty}K_n\right) \subseteq \bigcap_{n=0}^{\infty}F(K_n)$ is trivial. In order to show the other direction, let $x \in \bigcap_{n=0}^{\infty}F(K_n)$, then $x \in F(K_n)$ for all $n \in \mathbb{N}$. Let $H = \{y \in \Omega \mid F(\{y\}) \ni x\}$. Note that $H$ is a finite union of compact sets, hence is compact. Indeed, $H$ has finitely many points in $\Omega$. Since $x \in F(K_n)$ for all $n$, the sets $H_n = H \cap K_n$ are non-empty. So $\{H_n\}_{n=0}^{\infty}$ is a non-increasing sequence of non-empty compact sets. Therefore it has a non-empty intersection; i.e. $\exists y \in H$ such that $y \in \bigcap_{n=0}^{\infty}K_n$. As a consequence, $\{x\} \subseteq F(\{y\}) \subseteq F\left(\bigcap_{n=0}^{\infty}K_n\right)$. This shows that $x \in F\left(\bigcap_{n=0}^{\infty}K_n\right)$.\\
\end{proof}

\begin{corollary} \label{invariant_attractor_cor}
	The attractor is invariant under the map $F$; i.e. $F(A) = A$.\\
\end{corollary}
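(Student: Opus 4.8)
The plan is to apply Lemma~\ref{invariant_attractor_lemma} directly to the defining sequence of the attractor. Take $K_n = F^{n+1}(\Omega)$ for $n \geq 0$, so that $K_0 = F(\Omega) \supseteq K_1 = F^2(\Omega) \supseteq \cdots$. As already recorded in the paragraph preceding the lemma, $\{F^n(\Omega)\}_{n\ge 1}$ is a non-increasing sequence of non-empty compact sets in $\mathcal{X}$: each iterate is compact because $F$ sends a compact set to a finite union of translates of intersections of compact sets, non-empty because $\Omega$ is, and the chain is non-increasing since $F(\Omega)\subseteq\Omega$ together with the evident monotonicity $K\subseteq L \Rightarrow F(K)\subseteq F(L)$ yields $F^{n+1}(\Omega)\subseteq F^n(\Omega)$ by induction. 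Hence $\{K_n\}_{n\ge 0}$ satisfies the hypotheses of the lemma, and $\bigcap_{n=0}^{\infty} K_n = \bigcap_{n=1}^{\infty} F^n(\Omega) = A$.

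Next I would invoke Lemma~\ref{invariant_attractor_lemma} to push $F$ through the intersection and then re-index:
\[
F(A) \;=\; F\!\Bigl(\bigcap_{n=0}^{\infty} K_n\Bigr) \;=\; \bigcap_{n=0}^{\infty} F(K_n) \;=\; \bigcap_{n=0}^{\infty} F^{\,n+2}(\Omega) \;=\; \bigcap_{n=2}^{\infty} F^n(\Omega).
\]
Finally, since $\{F^n(\Omega)\}_{n\ge 1}$ is non-increasing, deleting its first term does not change the intersection, so $\bigcap_{n=2}^{\infty} F^n(\Omega) = \bigcap_{n=1}^{\infty} F^n(\Omega) = A$. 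Combining the two displays gives $F(A) = A$, which is the assertion.

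There is essentially no obstacle: the corollary is an immediate specialization of the lemma, and the only point requiring a word of care is the verification that $\{F^n(\Omega)\}$ is indeed a non-increasing chain of non-empty compact sets in $\mathcal{X}$ — which uses that $F(\Omega)\subseteq\Omega$ (built into the statement that $F\colon\mathcal{X}\to\mathcal{X}$) and the monotonicity of $F$, both of which have already been observed. If one prefers to avoid the cosmetic re-indexing, one can apply the lemma verbatim to $K_n = F^n(\Omega)$ viewed as starting at $n=1$; the same two-line computation then goes through unchanged.
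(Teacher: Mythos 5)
Your proposal is correct and matches the paper's intended argument: the paper states the corollary as a direct consequence of Lemma~\ref{invariant_attractor_lemma} applied to the non-increasing chain $\{F^n(\Omega)\}$, which is precisely what you carry out, with the re-indexing and the verification of the hypotheses supplied explicitly.
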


In some cases, a small change in the translation vectors $v_0, \,v_1, \ldots, \,v_{m-1}$ leads to a huge change in the geometry of $A$. In other words, $A$ is not ``continuous'' in the parameter space. It is convenient to introduce the Hausdorff metric on $\mathcal{X}$ to study the continuity of $A$. Here we denote the $\varepsilon$-neighborhood of a set $X$ with the standard Euclidean metric by $X_{\varepsilon}$. We will show that the orbit of $\Omega$ gets arbitrarily close to the attractor with respect to the Hausdorff metric.\\
\begin{definition}
	Let $X, Y \in \mathcal{X}$, the Hausdorff distance between $X$ and $Y$ is defined by $$d_H(X, Y) = \inf\{\varepsilon > 0 \mid \mbox{$X \subseteq Y_{\varepsilon}$ and $Y \subseteq X_{\varepsilon}$}\}.$$
\end{definition}

\begin{lemma} \label{A_epsilon}
	Let $K_0 \supseteq K_1 \supseteq \ldots$ be a non-increasing sequence of non-empty compact sets in $\mathcal{X}$. For every $\varepsilon > 0$, there exists $N \in \mathbb{N}$ such that $K_N \subseteq \left(\bigcap_{n=0}^{\infty}K_n\right)_{\varepsilon}$.
\end{lemma}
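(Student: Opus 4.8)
The plan is to write $K := \bigcap_{n=0}^{\infty} K_n$, which is non-empty and compact (a decreasing intersection of non-empty compacta), and to deduce the statement from the finite-intersection property rather than by any explicit estimate. First I would use that the $\varepsilon$-neighborhood $K_\varepsilon$ is open in $\mathbb{R}^d$, so its complement is closed; hence each set $C_n := K_n \setminus K_\varepsilon = K_n \cap (K_\varepsilon)^c$ is a compact set, being a closed subset of the compact set $K_n$. Since the $K_n$ are nested, so are the $C_n$: $C_0 \supseteq C_1 \supseteq \cdots$.

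Next I would compute the total intersection of this chain: $\bigcap_{n=0}^{\infty} C_n = \left(\bigcap_{n=0}^{\infty} K_n\right) \setminus K_\varepsilon = K \setminus K_\varepsilon$, and this is empty because $K \subseteq K_\varepsilon$ trivially (every point of $K$ is at distance $0 < \varepsilon$ from $K$). Then I would invoke the standard fact that a non-increasing sequence of non-empty compact sets has non-empty intersection, in contrapositive form: since $\bigcap_n C_n = \emptyset$, some term $C_N$ must already be empty. Concretely, the relative complements $C_0 \setminus C_n$ form an open cover of the compact set $C_0$; extracting a finite subcover and letting $N$ be the largest index appearing gives $C_0 \setminus C_N = C_0$, i.e. $C_N = \emptyset$. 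But $C_N = \emptyset$ says precisely $K_N \subseteq K_\varepsilon = \left(\bigcap_{n=0}^{\infty} K_n\right)_\varepsilon$, which is the claim.

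An alternative, essentially equivalent route is a direct contradiction argument: if $K_n \not\subseteq K_\varepsilon$ for every $n$, choose $x_n \in K_n \setminus K_\varepsilon$; all the $x_n$ lie in the compact set $K_0$, so some subsequence $x_{n_j}$ converges to a point $x$. For each fixed $m$ and all $n_j \geq m$ we have $x_{n_j} \in K_{n_j} \subseteq K_m$, and since $K_m$ is closed this forces $x \in K_m$; as $m$ was arbitrary, $x \in K$. On the other hand $\mathrm{dist}(x_n, K) \geq \varepsilon$ for all $n$, and $y \mapsto \mathrm{dist}(y,K)$ is $1$-Lipschitz, so $\mathrm{dist}(x, K) \geq \varepsilon > 0$, contradicting $x \in K$.

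I do not expect a genuine obstacle here. The only point that deserves a moment's care is the (standard) lemma that a decreasing sequence of non-empty compact sets cannot have empty intersection — or, in the second formulation, that one may pass to a convergent subsequence inside $K_0$ and that the limit stays in every $K_m$ by closedness. Everything else is routine bookkeeping with the definition of the $\varepsilon$-neighborhood and the observation that $(K_\varepsilon)^c$ is closed.
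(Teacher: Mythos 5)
Your first argument is exactly the paper's proof: the paper sets $S_n = K_n \setminus \left(\bigcap_{n=0}^{\infty}K_n\right)_{\varepsilon}$ (your $C_n$), notes these are nested compact sets whose total intersection would have to meet $\left(\bigcap_n K_n\right)\setminus\left(\bigcap_n K_n\right)_{\varepsilon}=\varnothing$, and concludes some $S_N$ is empty. Your write-up is correct and even fills in the small details (why $C_n$ is compact, why the intersection is empty) that the paper leaves implicit; the sequential-compactness variant you add is a fine equivalent alternative but not needed.
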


\begin{proof}
	Denote $S_n = K_n \setminus \left(\bigcap_{n=0}^{\infty}K_n\right)_{\varepsilon}$, then $S_n$ are compact and $S_0 \supseteq S_1 \supseteq \ldots$ If all $S_n$ are non-empty, then they have a non-empty intersection. This leads to $\left(\bigcap_{n=0}^{\infty}K_n\right) \setminus \left(\bigcap_{n=0}^{\infty}K_n\right)_{\varepsilon} \not= \varnothing$ (contradiction \lightning). Therefore, $\exists N \in \mathbb{N}$ such that $S_N = \varnothing$, which means $K_N \subseteq \left(\bigcap_{n=0}^{\infty}K_n\right)_{\varepsilon}$.\\ 
\end{proof}

\begin{corollary} \label{orbit_of_Omega}
	For every $\varepsilon > 0$, there exists $N \in \mathbb{N}$ such that $d_H(A, F^n(\Omega)) < \varepsilon$ for all $n \geq N$.
\end{corollary}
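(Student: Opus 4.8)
The plan is to deduce Corollary \ref{orbit_of_Omega} by combining the two facts already established: the invariance of the attractor (Corollary \ref{invariant_attractor_cor}, itself a consequence of Lemma \ref{invariant_attractor_lemma}) and the approximation statement of Lemma \ref{A_epsilon}. The key observation is that the sequence $K_n = F^n(\Omega)$ is exactly the kind of non-increasing sequence of non-empty compact sets to which Lemma \ref{A_epsilon} applies, and that its intersection $\bigcap_{n=1}^{\infty} F^n(\Omega)$ is, by definition, the attractor $A$.

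First I would fix $\varepsilon > 0$ and apply Lemma \ref{A_epsilon} to the sequence $K_n = F^n(\Omega)$ (re-indexing from $n=1$ causes no trouble, since we may set $K_0 = \Omega$ as well, and $F(\Omega) \subseteq \Omega$ guarantees the sequence is non-increasing). This yields an $N$ with $F^N(\Omega) \subseteq A_\varepsilon$. Next I would note that the sequence is non-increasing, so for every $n \geq N$ we have $F^n(\Omega) \subseteq F^N(\Omega) \subseteq A_\varepsilon$, which gives one of the two inclusions needed for $d_H(A, F^n(\Omega)) < \varepsilon$. For the reverse inclusion $A \subseteq (F^n(\Omega))_\varepsilon$, I would simply use $A = \bigcap_{k} F^k(\Omega) \subseteq F^n(\Omega) \subseteq (F^n(\Omega))_\varepsilon$, which holds for every $n$ with no restriction. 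Combining the two inclusions and recalling that $d_H$ is the infimum of such $\varepsilon'$, we get $d_H(A, F^n(\Omega)) \leq \varepsilon$ for all $n \geq N$; a cosmetic adjustment (run the argument with $\varepsilon/2$) upgrades $\leq$ to the strict inequality $< \varepsilon$ stated in the corollary.

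I do not anticipate a genuine obstacle here: once Lemma \ref{A_epsilon} is in hand, the corollary is essentially a matter of unwinding the definition of $d_H$ and exploiting that one of the two Hausdorff inclusions ($A \subseteq (F^n(\Omega))_\varepsilon$) is automatic because $A$ sits inside every $F^n(\Omega)$. The only point requiring a moment's care is the indexing mismatch between the corollary's $A = \bigcap_{n=1}^{\infty} F^n(\Omega)$ and the lemma's $\bigcap_{n=0}^{\infty} K_n$, together with the implicit use of $F(\Omega) \subseteq \Omega$ to know the orbit is nested in the first place; both are trivial to address. Notably, the invariance corollary is not strictly needed for this argument — it is the monotonicity of the orbit and Lemma \ref{A_epsilon} that do the work — though mentioning it frames the corollary as completing the picture of $A$ as a genuine limiting object of the dynamics.
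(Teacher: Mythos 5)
Your proposal is correct and matches the argument the paper intends: the corollary is stated without proof precisely because it follows by applying Lemma \ref{A_epsilon} to the non-increasing sequence $K_n = F^n(\Omega)$ and using the automatic inclusion $A \subseteq F^n(\Omega)$, exactly as you do. Your attention to the indexing and to the monotonicity of the orbit (via $F(\Omega) \subseteq \Omega$ and the fact that $K \subseteq K'$ implies $F(K) \subseteq F(K')$) fills in the small details the paper leaves implicit.
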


\section{Finiteness theorem when $m = d+1$}

In this section, we are interested in piecewise translation maps of $m = d+1$ branches in $\mathbb{R}^d$. We assume that $\text{rank} \{v_0,v_1,\ldots,v_{m-1}\} = d$; that is, the vectors in $\mathcal{B} = \{v_1 - v_0, v_2 - v_0, \ldots, v_{m-1} - v_0\}$ form a basis for $\mathbb{R}^d$. Let $\mathcal{L}$ be the lattice generated by $\mathcal{B}$, then $\mathbb{T} = \mathbb{R}^d / \mathcal{L}$ is a d-dimensional torus. In this case, $v_0$ serves as the translation vector in $\mathbb{T}$; i.e. we have the torus rotation map $R: \mathbb{T} \to \mathbb{T}, \, R(x + \mathcal{L}) = x + v_0 + \mathcal{L}$. Atom-wise, the action of $F$ is the same as the action of $R$ on $\mathbb{T}$. Globally, it is also true and can be seen through the canonical projection $\pi: \mathbb{R}^d \to \mathbb{T}, \, \pi(x) = x + \mathcal{L}$. \\

\begin{lemma}\label{torus_map_semi_lemma}
	If $K \subseteq \Omega$ is compact, then $\pi \circ F(K) = R \circ \pi(K)$.
\end{lemma}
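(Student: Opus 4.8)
The plan is to reduce the statement to the single pointwise identity $\pi \circ T_i = R \circ \pi$ on $\mathbb{R}^d$, valid for \emph{every} branch index $i$, and then transport it through the finite union that defines $F$. First I would verify this identity directly: for $x \in \mathbb{R}^d$ we have $\pi(T_i(x)) = x + v_i + \mathcal{L}$, and since $v_i - v_0$ lies in $\mathcal{L}$ (it is either one of the generators in $\mathcal{B}$, or $0$ when $i = 0$), we get $x + v_i + \mathcal{L} = x + v_0 + \mathcal{L} = R(x + \mathcal{L}) = R(\pi(x))$. Thus each translation $T_i$ descends to the \emph{same} rotation $R$ on $\mathbb{T}$, which is the conceptual heart of the lemma.

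Next I would use that taking images commutes with unions, so that $\pi(F(K)) = \pi\bigl(\bigcup_{i=0}^{m-1} T_i(K \cap B_i)\bigr) = \bigcup_{i=0}^{m-1} \pi\bigl(T_i(K \cap B_i)\bigr) = \bigcup_{i=0}^{m-1} R\bigl(\pi(K \cap B_i)\bigr)$, the last equality being the pointwise identity applied on the set $K \cap B_i \subseteq \mathbb{R}^d$. Since $R$ is a bijection of $\mathbb{T}$, it also commutes with unions, so this expression equals $R\bigl(\bigcup_{i=0}^{m-1} \pi(K \cap B_i)\bigr)$.

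The final step is to recognize $\bigcup_{i=0}^{m-1} \pi(K \cap B_i)$ as $\pi(K)$. Here I would write $\bigcup_{i} \pi(K \cap B_i) = \pi\bigl(\bigcup_i (K \cap B_i)\bigr) = \pi\bigl(K \cap \bigcup_i B_i\bigr) = \pi(K \cap \Omega) = \pi(K)$, using the hypothesis $\Omega = \bigcup_{i=0}^{m-1} B_i$ together with $K \subseteq \Omega$. Chaining the three displays gives $\pi \circ F(K) = R \circ \pi(K)$, as desired.

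There is essentially no obstacle here; the only points requiring a moment's care are entirely routine. One is that the identity $\pi \circ T_i = R \circ \pi$ must be invoked on $K \cap B_i$, which is legitimate because $T_i$ and $\pi$ are defined on all of $\mathbb{R}^d$. Another is that compactness of $K$ is not actually used for the set-theoretic equality — it is present only to keep $K$, and hence $F(K)$, inside $\mathcal{X}$ — so I would not belabor it in the write-up.
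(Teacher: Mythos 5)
Your proposal is correct and rests on the same key observation as the paper's proof, namely that $v_i - v_0 \in \mathcal{L}$ makes every branch $T_i$ descend to the single rotation $R$ on $\mathbb{T}$. The paper verifies this by an element-wise double inclusion, while you package it as the pointwise identity $\pi \circ T_i = R \circ \pi$ pushed through unions; this is only a difference in presentation, and your remark that compactness of $K$ is irrelevant to the set-theoretic equality is accurate.
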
              
\begin{proof}
	Let $\phi \in \pi \circ T(K)$, then $\phi \in \bigcup_{i=1}^{m-1}\pi(T_i(B_i \cap K))$. So $\phi = \pi(T_j(B_j \cap K))$ for some $j$. Therefore, $\exists y \in B_j \cap K$ such that $\phi = \pi(y + v_j)$. As a result, $$\phi = y + v_j + \mathcal{L} = y + v_0 + (v_j - v_0) + \mathcal{L} = y + v_0 + \mathcal{L} = R \circ \pi(y) \in R \circ \pi(K).$$
	For the backward inclusion, let $\phi \in R \circ \pi(K)$, then $\phi = R \circ \pi(y) = y + v_0 + \mathcal{L}$ for some $y \in K$. Without loss of generality, we can assume $y \in B_j$. In this case, $\phi = \pi(y + v_j) \in \pi(T_j(B_j \cap K))$. Hence, $x \in \pi \circ F(K)$.\\
\end{proof}

It was proven that any piecewise translation of an interval with 2 branches is of finite type \cite{V}. However, this does not hold for dimension 2 or higher. Indeed, not all double rotation on a torus is of finite type. In the case where $m=d+1$, there is a space of parameters (translation vectors) where we will get finite type attractors. We assume that the translation vectors $v_0, \,v_1, \ldots, v_{m-1}$ are \emph{rationally independent}, i.e. they are linearly independent in $\mathbb{Q}$-vector space. This is equivalent for the toral map $R$ to be \emph{ergodic}. In this case, we say that the translation vectors are \emph{minimal}. With this assumption, the map $F$ is necessarily of finite type. We start with some definitions that will be used in the proof.\\

\begin{definition}
A set $X$ is said to be \emph{invariant} under the map $f:M\to M$ if $f(X) = X$.\\
\end{definition}

\begin{definition}
	A toral map $R: \mathbb{T} \to \mathbb{T}$ is said to be \emph{minimal} if every orbit is dense; i.e there is no proper closed invariant subset.\\
\end{definition}

\begin{definition}
	A set $X \subseteq \mathbb{R}^d$ is \emph{nowhere dense} if $\text{int } \bar{A} = \varnothing$.\\ 
\end{definition}

\begin{theorem}\label{ergodic_finite_theorem}
	Piecewise translation maps with \emph{rationally independent} translation vectors are of finite type. 
\end{theorem}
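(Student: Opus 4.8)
The plan is to push everything down to the torus $\mathbb{T}=\mathbb{R}^d/\mathcal{L}$ through $\pi$, use minimality of the rotation $R$ to pin down the orbit of $\Omega$ globally, and then reduce finiteness of type to a uniform bound on the ``lifetimes'' of points outside the attractor. First, by Lemma~\ref{torus_map_semi_lemma}, $\pi\circ F=R\circ\pi$, so $\pi(F^n(\Omega))=R^n(\pi(\Omega))$. Since $F$ maps $\mathcal{X}$ into itself, $R(\pi(\Omega))=\pi(F(\Omega))\subseteq\pi(\Omega)$; hence $\bigcap_{k\ge 0}R^k(\pi(\Omega))$ is a non-empty compact set which, $R$ being a bijection, is $R$-invariant, hence equals $\mathbb{T}$ by minimality of $R$ (this is the rational-independence hypothesis). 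Therefore $\pi(\Omega)=\mathbb{T}$ and $\pi(F^n(\Omega))=\mathbb{T}$ for every $n$, so every fibre $\pi^{-1}(\theta)\cap F^n(\Omega)$ is non-empty. A fibre $\pi^{-1}(\theta)$ is a coset of $\mathcal{L}$, so $\pi^{-1}(\theta)\cap\Omega$ is finite, and, $\Omega$ being bounded, of cardinality at most some $M$ independent of $\theta$.

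Next I would count by fibres. Put $g_n(\theta)=\#\bigl(\pi^{-1}(\theta)\cap F^n(\Omega)\bigr)$, so $1\le g_n\le M$ and $g_n$ is non-increasing in $n$ because the $F^n(\Omega)$ are nested; let $g_\infty$ be its pointwise limit, which is $\#(\pi^{-1}(\theta)\cap A)$. Since $A\subseteq F^n(\Omega)$ and the fibres are finite, $F^N(\Omega)=A$ as sets is equivalent to $g_N\equiv g_\infty$ as functions; so the theorem is precisely the assertion that the decreasing sequence of $\{0,\dots,M\}$-valued functions $(g_n)$ is eventually constant, equivalently that the lifetime $\ell(x):=\sup\{n:x\in F^n(\Omega)\}$ is bounded uniformly over $x\in\Omega\setminus A$. (As a consistency check: $F(A)=A$ from Corollary~\ref{invariant_attractor_cor} together with $\pi\circ F=R\circ\pi$ makes $g_\infty$ invariant under $R$ off a Lebesgue-null set, so ergodicity of $R$ already forces $g_\infty$ to be a.e. a single constant.)

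To analyse lifetimes, unwind the definition: $x\in F^n(\Omega)$ iff there is an admissible backward path $x=y_0,y_1,\dots,y_n$ with $y_k=y_{k-1}-v_{j_k}$ and $y_k\in B_{j_k}$. Writing $w_k=y_k-x$ and using the basis $\mathcal{B}$, this is a walk $w_0=0,w_1,w_2,\dots$ with increments among $-v_0,\dots,-v_d$, confined to the bounded set $\Omega-x$, whose coordinates are monotone modulo the linear drift $-kv_0$, and which must obey the branch constraints $w_k\in B_{j_k}-x$. A short computation shows that $F(\Omega)\subseteq\Omega$ forces $v_0$, expressed in $\mathcal{B}$, to have all coordinates $\le0$ and $1$-norm $\le1$ — equivalently $0\in\operatorname{conv}(v_0,\dots,v_d)$ — for otherwise such confined walks would have length bounded purely in terms of $\operatorname{diam}(\Omega)$ and the $v_i$, making $A$ empty and contradicting the non-emptiness of the attractor. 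I would then invoke minimality of $R$ (so the group $\mathbb{Z}v_0+\mathcal{L}$ is dense in $\mathbb{R}^d$ and the $R$-orbit of $\pi(x)$ is equidistributed) to argue that a maximal admissible backward path from an $x\notin A$ must be wedged against $\partial(\Omega-x)$ in a controlled way, yielding a length bound that depends only on $\Omega$ and the $v_i$.

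The step I expect to be the real obstacle is exactly this last one: showing that the monotone lattice walk $w_k$ cannot ``hug'' the drift line $-kv_0$ arbitrarily long under the branch constraints, and in particular ruling out the scenario in which the $R$-orbit of $\pi(x)$ recurs near $\pi(x)$ only on lattice translates of $x$ other than $x$ itself (the ``wrong sheet'' issue). This is precisely where $m=d+1$ — which makes $\{v_0,\dots,v_d\}$ the vertices of a $d$-simplex containing $0$ — and the minimality hypothesis must be used essentially rather than cosmetically; everything else (the torus reduction, the fibre counting, the reduction to a lifetime bound) is organisational, isolating this combinatorial–geometric core and naming the constant by which lifetimes are bounded.
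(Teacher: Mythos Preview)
Your reformulation into fibre counts $g_n$ and lifetimes $\ell(x)$ is valid, and you correctly locate the difficulty --- but the proposal stops short of a proof: the uniform bound on $\ell$ is never established, and the lattice-walk sketch offers no mechanism to rule out arbitrarily long admissible backward paths. The ``wrong sheet'' obstruction you name is real, and nothing in the outline dissolves it; the claim that $0\in\operatorname{conv}(v_0,\dots,v_d)$, even if granted, does not by itself bound walk lengths when the drift is exactly compensated.

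The paper sidesteps this combinatorial core entirely by working with the attractor $A$ rather than with $\Omega$ or backward orbits. From $F(A)=A$ and the semi-conjugacy, $\pi(A)$ is a non-empty closed $R$-invariant subset of $\mathbb{T}$, hence $\pi(A)=\mathbb{T}$ by minimality. Since $A$ is closed, $\partial A$ is nowhere dense in $\mathbb{R}^d$; because $\pi$ is finite-to-one on the compact $\Omega$, $\pi(\partial A)$ is nowhere dense in $\mathbb{T}$, so one can choose a ball $B\subseteq\mathbb{T}$ and $\varepsilon>0$ with $B\cap(\pi(\partial A))_\varepsilon=\varnothing$. Now combine two ingredients absent from your plan: Lemma~\ref{A_epsilon} gives $N$ with $F^n(\Omega)\subseteq A_\varepsilon$ for all $n\ge N$, and \emph{uniform} minimality of $R$ gives $M$ such that every $R$-orbit enters $B$ within $M$ steps. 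For any $x$ and $n\ge N$ there is then $k\le M$ with $\pi(F^{n+k}(\{x\}))\subseteq B$, so $F^{n+k}(\{x\})$ lies in $A_\varepsilon$ yet avoids a neighbourhood of $\partial A$, forcing $F^{n+k}(\{x\})\subseteq\operatorname{int}A$. Hence $F^{N+M}(\Omega)\subseteq A$. The idea you are missing is precisely this topological trap: $\partial A$, not the walk combinatorics, is the right object, and Hausdorff convergence of $F^n(\Omega)$ to $A$ together with the thinness of $\pi(\partial A)$ in $\mathbb{T}$ does all the work.
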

\begin{proof}
	The idea is to show that $F^N(\Omega)$ is contained in some $\varepsilon$-neighborhood of $A$ but not contained in $\varepsilon$-neighborhood of $\partial A$ for some $N>0$. In this case, $F^N(\Omega)$ must be in the interior of $A$, thus a subset of $A$.\\\\
	Since $A$ is compact, non-empty and $\pi$ is continuous, $\pi(A)$ is compact and non-empty. Moreover, $R \circ \pi(A) = \pi \circ F(A) = \pi(A)$ by Lemma \ref{invariant_attractor_lemma}, so $\pi(A)$ is invariant under $R$. We know that ergodic torus rotation is minimal; therefore, there is no proper closed invariant sets. Since $\pi(A)$ is non-empty, $\pi(A) = \mathbb{T}$. Since $A$ is closed, $\partial A$ is nowhere dense in $\mathbb{R}^d$. Since $\Omega$ is compact, the projection $\pi$ is at most finitely branched, so locally $\pi(\partial A)$ is a finite union of nowhere dense sets, and thus is nowhere dense in $\mathbb{T}$. Therefore, we can take a small ball $B$ in the complement of $\pi(\partial A)$ and $\varepsilon > 0$ such that for $U = (\pi(\partial A))_{\varepsilon}$ we have $U \cap B = \varnothing.$\\\\
	By Lemma \ref{A_epsilon}, there exists $N > 0$ such that $F^n(\Omega) \subseteq A_{\varepsilon}$ for all $n > N$. So we only need to show that $F^n(\Omega)$ is not in $(\partial A)_{\varepsilon}$ if $n$ is large enough. It is known that every ergodic toral rotation map is uniformly minimal; i.e. for the open set $B$, there exists $M > 0$ such that $\forall \phi \in \mathbb{T}, \, R^{k(\phi)} \phi \in B$ for some $0 < k(\phi) < M$. Note that $k$ depends on $\phi$ while $M$ does not.\\\\
	\underline{Claim}: $\pi \circ F^{n+k}(\{x\}) \cap U = \varnothing$ for some $0<k<M$.\\
	Indeed, we can apply Lemma \ref{torus_map_semi_lemma} and the uniformly minimal property for $\phi = R^n \circ \pi(x)$:
	\begin{equation}
		\begin{aligned}
		\pi \circ F^{n+k}(\{x\}) &= R^{n+k}\circ \pi(\{x\}) 
														\\&= \{R^{n+k}\circ \pi(x)\}
														\\&= \{R^k(\underbrace{R^n\circ\pi(x)}_{\phi})\} \subseteq B.
		\end{aligned}
	\end{equation}
	Since $\pi$ is continuous, we can choose a positive $\delta < \varepsilon$ such that $\pi((\partial A)_{\delta}) \subseteq \left(\pi(\partial A)\right)_{\varepsilon} = U$. Hence $\pi \circ F^{n+k}(\{x\}) \cap \,U = \varnothing.$ Therefore, $F^{n+k}(\{x\}) \,\cap\, (\partial A)_{\epsilon} = \varnothing$. This leads to $F^{n+k}(\{x\}) \subseteq \text{int } A \subseteq A$. Since $k = k(x)$ is uniformly bounded by $M$, we have $F^{N+M}(\Omega) \subseteq A$, i.e. the map $F$ is of finite type.\\    
\end{proof}

 \begin{figure}
	\centering
		\includegraphics[scale=2]{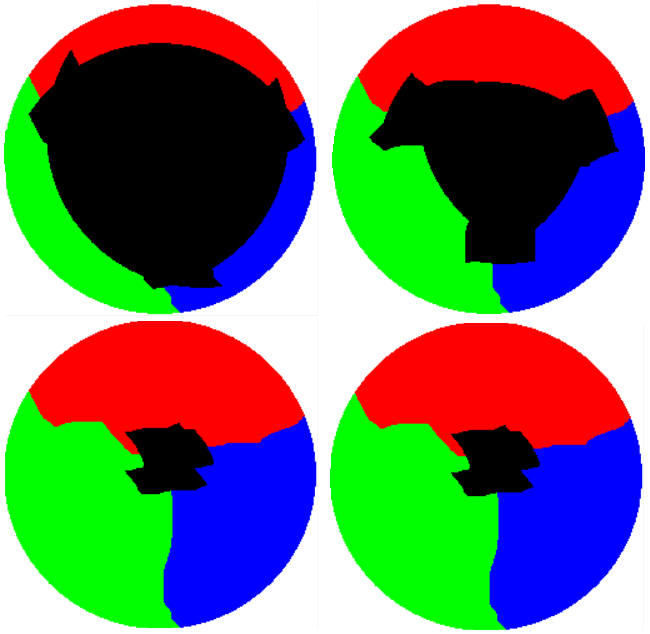}
		\captionsetup{justification=centering}
		\captionsetup{format=hang}
		\caption{The black regions represent $F^n(\mathbb{D}^2)$ for $n=1,2,8,9$.\\The disk converges to the attractor after 8 iterations. The map is of finite type.}
		\label{fig:disk_finite_type}
	\end{figure}

\section{Semi-continuity of the attractors}
In general, it is not true that the attractor is continuous with respect to Hausdorff metric in the parameter space. Even in the case $m = d+1$, we observed that sometimes the two separate pieces of the attractor reglue to each other. One of our concerns has not been proven yet.\\ 

\begin{question}
	Is it true that for a generic family of partitions, the attractor depends continuously on the translation vectors? \\
\end{question}   

However, in the case $m=d+1$, the question of semi-continuity of the attractor is easier to answer. We will show that for a particular family of partitions, finite type attractors are semi-continuous on the translation vectors. Before showing the proof for this, we need a formal definition for semi-continuity of the attractors.\\

\begin{definition}
	Let $\mathcal{W}$ be the space of translation vectors such that the associated piecewise translation map $F_{v_0, v_1, \ldots, v_{m-1}}$ is of finite type with a fixed partition $\mathcal{B}$. Denote $X_a$ the attractor associated with the parameter $a \in \mathcal{W}$. We say that the attractors are \emph{semi-continuous} on $\mathcal{W}$ if for every $a \in \mathcal{W}$, there exists a neighborhood $O(a)$ such that for each $b \in O(a)$, we have $X_b \subseteq (X_a)_{\varepsilon}$.  \\
\end{definition}  

Note that the semi-continuity of the attractor corresponds to the \emph{pseudo}-Hausdorff metric $d(X, Y) = \sup\{d(x, Y) \mid x \in X\}$. It is clear that $$d_H(X, Y) = \max \{d(X, Y), \, d(Y,X)\}.$$

\begin{conjecture} \label{semi_continuity}
 If $m = d+1$, there exists a generic parameter value such that the attractors are semi-continuous in $\mathcal{W}$.
\end{conjecture}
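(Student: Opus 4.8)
The plan is to establish the conjecture (in fact, semicontinuity at \emph{every} finite-type parameter) from a single uniform-stability property of $F$: \textbf{joint upper semicontinuity.} I claim that if $a_n\to a_0$ in parameter space and $K_n\to K$ in the Hausdorff metric, then for each $\varepsilon>0$ one has $F_{a_n}(K_n)\subseteq\bigl(F_{a_0}(K)\bigr)_{\varepsilon}$ for all large $n$. To see this, suppose not and pass to a subsequence along which some $y_n\in F_{a_n}(K_n)$ stays at distance $\ge\varepsilon$ from $F_{a_0}(K)$; write $y_n=x_n+v_{i_n}^{(n)}$ with $x_n\in K_n\cap B_{i_n}$, and refine (only finitely many branches, $\Omega$ compact) so that $i_n\equiv i$, $v_i^{(n)}\to v_i^{(0)}$, and $x_n\to x$. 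Since each $B_i$ is closed, $x\in B_i$; since $\sup_{z\in K_n}d(z,K)\to 0$ and $K$ is closed, $x\in K$; hence $x+v_i^{(0)}\in F_{a_0}(K)$ while $y_n\to x+v_i^{(0)}$, a contradiction. The point to notice is that this argument only consumes the one-sided estimate $\sup_{z\in K_n}d(z,K)\to0$, so it composes verbatim: by induction on $N$ the map $a\mapsto F_a^{\,N}(\Omega)$ is upper semicontinuous for every fixed $N$.

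Granting this, here is how the conjecture comes out. Fix $a_0\in\mathcal{W}$ and, using finite type, choose $N_0$ with $X_{a_0}=F_{a_0}^{\,N_0}(\Omega)$. For an arbitrary nearby parameter $b$, the attractor is the nested intersection of its orbit, so $X_b=\bigcap_{n\ge1}F_b^{\,n}(\Omega)\subseteq F_b^{\,N_0}(\Omega)$; and by upper semicontinuity of $b\mapsto F_b^{\,N_0}(\Omega)$ at $a_0$ there is a neighbourhood $O(a_0)$ on which $F_b^{\,N_0}(\Omega)\subseteq\bigl(F_{a_0}^{\,N_0}(\Omega)\bigr)_{\varepsilon}=(X_{a_0})_{\varepsilon}$. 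Chaining the two inclusions gives $X_b\subseteq(X_{a_0})_{\varepsilon}$ for every $b\in O(a_0)$, which is precisely the required semicontinuity. In particular this step needs nothing about $b$ beyond the defining description of $X_b$ (Corollary~\ref{invariant_attractor_cor}); $b$ itself need not be of finite type.

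So the dynamical content is already handled, and the genuine obstacle — the place where a genericity hypothesis should really live — is the \emph{partition} layer, not the translation vectors. The argument above uses freely that $F_b\colon\mathcal{X}\to\mathcal{X}$ is well defined (images land in $\Omega$) for $b$ in a full neighbourhood, and the finite-type collapse $X_{a_0}=F_{a_0}^{\,N_0}(\Omega)$ is being imported from Theorem~\ref{ergodic_finite_theorem}, whose hypotheses (the fixed lattice $\mathcal{B}$, rational independence) are exactly what must persist. To make the statement rigorous I would therefore: (i) isolate a class of partitions for which $F_b$ is a self-map of $\mathcal{X}$ and the hypotheses of Theorem~\ref{ergodic_finite_theorem} hold on a full neighbourhood, and (ii) observe that the rationally independent parameters — residual and of full Lebesgue measure — furnish the generic set $\mathcal{W}$. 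I expect this to be essentially book-keeping; what this approach deliberately does \emph{not} attempt, and what the regluing phenomenon seen numerically genuinely obstructs, is the reverse estimate $X_{a_0}\subseteq(X_b)_{\varepsilon}$, whose failure at regluing thresholds is exactly why only semicontinuity, and not full Hausdorff continuity, can be expected.
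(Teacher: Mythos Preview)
The paper gives no proof of this statement. Despite the sentence ``We will show that for a particular family of partitions, finite type attractors are semi-continuous on the translation vectors,'' the result is then recorded as a \emph{Conjecture} and Section~4 ends immediately after it; no argument is supplied. So there is nothing in the paper to compare your proposal against.

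On its own merits your argument is the standard and correct one. The mechanism you isolate --- at a finite-type parameter $a_0$ the attractor equals the finite iterate $F_{a_0}^{N_0}(\Omega)$, while for any $b$ one always has $X_b\subseteq F_b^{N_0}(\Omega)$, so it suffices to prove upper semicontinuity of the finite composition $b\mapsto F_b^{N_0}(\Omega)$ --- is exactly how such statements are established. Your compactness/subsequence proof of joint upper semicontinuity for a single application of $F$ is clean, uses only the one-sided Hausdorff estimate (so it iterates), and uses that each $B_i$ is closed, which the paper's standing hypothesis (the $B_i$ are compact) guarantees. In fact you obtain more than the conjecture asks: semicontinuity at \emph{every} point of $\mathcal W$, not merely at generic ones.

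The caveats you flag are the right ones and are genuinely about the partition rather than the dynamics: one needs $F_b$ to remain a self-map of $\mathcal X$ for $b$ near $a_0$ (e.g.\ each $T_i(B_i)$ lies in the interior of $\Omega$), and one needs the hypotheses of Theorem~\ref{ergodic_finite_theorem} to supply a full-measure, residual set of finite-type parameters. Once those standing assumptions are made explicit, your outline is a complete proof; the paper simply does not carry this out.
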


\section{Piecewise translation maps on tori}
In this section, we discuss a special class of piecewise translations of two branches on a two dimensional torus called \emph{double rotations}. Double rotations were discussed in \cite{SIA} and \cite{G}. Synchronization properties of random double rotations on tori were studied in \cite{GK}. Here we will show some pictures of double rotations with finite and infinite types. \\

\begin{definition}
	Let $\mathbb{T}$ be a two-dimensional torus and $v_0, v_1$ are vectors in $\mathbb{R}^2$. Consider a subset $R$ of $\mathbb{T}$. The map $f: \mathbb{T} \to \mathbb{T}$ defined as 
	$$
f(x)=\left\{
       \begin{array}{ll}
         x + v_0 \;(\text{mod}\ 1), & \hbox{if $x\in R$;} \\
         x + v_1 \;(\text{mod}\ 1), & \hbox{otherwise}
       \end{array}
     \right.
$$
is called a \emph{double rotation} on $\mathbb{T}$. \\
\end{definition}

In Figure \ref{fig:torus_rotation}, the torus is partitioned into two regions - green and blue. We assign a different translation vector to each region. After applying the map, we see that a region from the torus is lost (the red region). In this case, it is the translational image of the green region. Applying the map one more time, we get an additional lost region. Again, this is a translational image of the previous red region. After 500 times, we no longer get any additional lost region. The map is of finite type and its attractor is the blue part in Figure \ref{fig:finite_attractor_torus}. On the other hands, in Figure \ref{fig:infinite_attractor_torus}, we do not get the attractor even after 5000 iterates. The map seems to be of infinite type.\\  

Let $\mathcal{V}$ be the space of parameters $\{v_0, v_1\}$ with a fixed partition such that the double rotation map $f_{v_0, v_1}$ is of finite type. By computer simulation, we conjecture that the map is of finite type for almost every parameter. If the map is finite, we also expect the attractor is semi-continuous with respect to Hausdorff distance. This behavior has already been seen in the previous section.\\  
\begin{conjecture}
$\mathcal{V}$ is an open, dense, and full measure subset of the set of all parameters. \\
\end{conjecture}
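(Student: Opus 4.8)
\emph{Sketch of a possible approach.} The plan is to treat the three assertions — open, dense, and full measure — separately, each reduced to understanding how the nested sequence $\Omega_n = f^n(\mathbb{T})$ loses measure. Write $L_1 = \mathbb{T}\setminus\Omega_1$ and, inductively, $L_{n+1} = \Omega_n\setminus\Omega_{n+1}$. The combinatorial heart of the argument is to show that each $L_{n+1}$ is the intersection of a translate of $L_n$ with a fixed region determined by $R,\,R^c,\,v_0,\,v_1$: the ``new loss'' at step $n+1$ is exactly the overlap $(\Omega_n\cap R + v_0)\cap(\Omega_n\cap R^c + v_1)$, and keeping track of which branch a lost point was about to fall into yields the recursion $L_{n+1} \subseteq (L_n + v_1 - v_0)\cap(R + v_1)$ (up to boundaries). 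Consequently $f$ is of finite type if and only if this recursion terminates, i.e.\ $L_n = \varnothing$ for some $n$, equivalently the forward orbit under the induced partial map of the ``seam'' region eventually leaves the relevant window. This recasts membership in $\mathcal{V}$ as a concrete cylinder-type condition on $(v_0,v_1)$, and I would build everything on that.

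For \emph{density}, I would exhibit a dense family of finite-type parameters, the cleanest candidates being those for which $v_1-v_0$ and the offsets of $R$ are rational: then the recursion above lives on a rational subtorus (a finite set of pieces), so the sequence $(L_n)$ is eventually periodic; since each nontrivial step strictly decreases measure, periodicity forces $L_n=\varnothing$ after finitely many steps, and the parameter lies in $\mathcal{V}$. Such parameters are dense, giving density of $\mathcal{V}$; the degenerate cases where $R+v_0$ and $R^c+v_1$ already tile $\mathbb{T}$ (so $N=0$) only help. For \emph{full measure}, I would adapt the renormalization strategy used for double rotations of the circle in \cite{BC}: pass to the first-return map of $f$ to a suitably chosen subrectangle, which up to affine rescaling is again a double rotation, producing a Rauzy--Veech-type induction map $\mathcal{R}$ on parameter space; then show $\mathcal{R}$ admits an invariant measure equivalent to Lebesgue, that ``one more region is lost'' corresponds to a positive-measure subset visited with probability one under $\mathcal{R}$ (so by ergodicity, or a Borel--Cantelli argument, almost every orbit eventually enters the terminating cylinder), and pull this back to full Lebesgue measure of $\mathcal{V}$. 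The real work here is constructing the induction and proving its invariant measure is equivalent to Lebesgue; the one-dimensional case is a guide, but the planar combinatorics of the return maps will require genuine effort.

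The \emph{openness} of $\mathcal{V}$ is the step I expect to be the main obstacle, and plausibly the reason this remains a conjecture. If $f_{v_0,v_1}$ is of finite type with $\Omega_N = A$, then (for polygonal $R$) $\Omega_N$ is a finite union of polytopes and the equalities $\Omega_k = \Omega_{k+1}$ for $k\ge N$ say that the pieces of $f$ reassemble $\Omega_N$ with no overlap — a ``perfect matching'' of polytopal pieces. One wants this matching to be combinatorially stable: for $(v_0',v_1')$ near $(v_0,v_1)$ the polytopes deform continuously, the finitely many face incidences making the matching work persist, and hence $\Omega_N' = f'(\Omega_N')$. The danger is a non-transverse incidence — two faces that are flush at the given parameter but split under an arbitrarily small perturbation, creating a sliver $L_{N+1}\neq\varnothing$ and then infinitely many. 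Ruling this out seems to need a genericity hypothesis on the partition $R$, so I would first establish openness for a generic (full-measure, residual) set of partitions and flag the general case as genuinely open. A secondary subtlety is uniform control of the stabilization time $N$: a priori it could blow up as one approaches $\partial\mathcal{V}$, so one must show $N$ is locally bounded, for instance by combining a semicontinuity estimate for the attractor in the spirit of Corollary \ref{orbit_of_Omega} with the combinatorial rigidity above.
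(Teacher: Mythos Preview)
The paper does not prove this statement: it is explicitly labeled a \emph{conjecture}, motivated only by computer simulation, and no argument (not even a sketch) is offered. There is therefore no ``paper's own proof'' to compare your proposal against. What you have written is a research plan for attacking an open problem, not an alternative to an existing proof.

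That said, a few remarks on the plan itself. Your density argument has a real gap: rationality of $v_1-v_0$ and of the offsets of $R$ does \emph{not} by itself force the sequence $(L_n)$ to live in a finite set of shapes, because each $L_n$ is an intersection of translates and can develop arbitrarily many vertices; the ``eventually periodic hence terminating'' step needs a genuine finiteness lemma that you have not supplied. Your full-measure argument is essentially a wish: a Rauzy--Veech-type induction for two-dimensional double rotations is not known, and building one (together with the required absolutely continuous invariant measure) would be a substantial result in its own right, well beyond what \cite{BC} does on the circle. You are right that openness is delicate, and your identification of non-transverse face incidences as the obstruction is sensible; but note that this same difficulty already undermines the stability you would need even for the density and full-measure parts once you try to pass from a dense or conull set of ``good'' parameters to an open one. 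In short, the proposal is a reasonable outline of where the difficulties lie, but none of the three steps is close to complete, which is consistent with the paper leaving the statement as a conjecture.
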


\begin{conjecture}
The attractor is semi-continuous with respect to the parameters in $\mathcal{V}$.\\
\end{conjecture}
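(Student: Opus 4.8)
The plan is to make the finite-type hypothesis explicit, describe the attractor as a finite union of translated polygons, and show that these data depend continuously on $(v_0,v_1)$ away from a degenerate set of parameters. Fix $a=(v_0,v_1)\in\mathcal V$, so that $X_a=f_a^{N}(\mathbb T)$ for some $N$, and assume the fixed region $R$ is polygonal (bounded by finitely many geodesic segments), as in every numerical example. For $x\in\mathbb T$ the itinerary $(\varepsilon_0,\dots,\varepsilon_{N-1})\in\{0,1\}^{N}$ recording whether $f_a^{i}(x)\in R$ is locally constant, and on each region of constancy $f_a^{N}$ is the single translation $x\mapsto x+w$ with $w=kv_0+lv_1$, $k+l=N$. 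These regions are the finitely many components of $\mathbb T\setminus\bigcup_{i=0}^{N-1}f_a^{-i}(\partial R)$; writing $P_1^{a},\dots,P_r^{a}$ for their closures and $w_1,\dots,w_r$ for the corresponding vectors, one obtains $X_a=\bigcup_{j=1}^{r}T_{w_j}(P_j^{a})$ (the boundary segments are carried into this union, so the equality is exact at non-degenerate parameters).

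Call $a$ \emph{regular} if the finitely many segments composing $\bigcup_{i=0}^{N-1}f_a^{-i}(\partial R)$ are nondegenerate and meet one another, and themselves, transversally, and if every cell $P_j^{a}$ has nonempty interior. The first step is a stability lemma: at a regular parameter there is a neighborhood $O(a)$ such that every $b\in O(a)$ is again regular and its level-$N$ cell decomposition has the same combinatorics (same index set, same exponents $(k_j,l_j)$), with $P_j^{b}\to P_j^{a}$ and $w_j^{b}\to w_j$ as $b\to a$. This is the routine fact that a transversal arrangement of segments varies continuously with the segments, together with the observation that, on each combinatorial stratum, those segments are affine in $(v_0,v_1)$. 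Granting it, $f_b^{N}(\mathbb T)=\bigcup_j T_{w_j^{b}}(P_j^{b})\to X_a$ in the Hausdorff metric, so for $b$ close enough to $a$ we get $f_b^{N}(\mathbb T)\subseteq (X_a)_{\varepsilon}$, and hence $X_b=\bigcap_{n}f_b^{n}(\mathbb T)\subseteq f_b^{N}(\mathbb T)\subseteq(X_a)_{\varepsilon}$, which is exactly semi-continuity at $a$. (This argument does not even use $b\in\mathcal V$; together with the reverse inclusion, which follows once one also knows that the identity $f_b^{N+1}(\mathbb T)=f_b^{N}(\mathbb T)$ persists near $a$, it gives full Hausdorff continuity at regular parameters.)

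It remains to see that non-regular parameters are negligible. With $N$ fixed, each possible degeneracy — a tangency or coincidence among the finitely many boundary segments, or the collapse of a cell $P_j^{a}$ — is a polynomial condition on $(v_0,v_1)$ over each combinatorial stratum, hence confined to a proper, nowhere-dense, Lebesgue-null subset; the stratum walls are likewise null, and the union over all $N\in\mathbb N$ remains meager and null. One thus obtains semi-continuity — in fact continuity — of $a\mapsto X_a$ at every parameter in a residual, full-measure subset of $\mathcal V$.

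The main obstacle is the remaining non-regular parameters, which is exactly where the regluing phenomenon lives. At such an $a$ some level-$N$ cell is degenerate, so that for $b$ on one side of $a$ a genuine thin sliver of positive area appears; if $f_b$ does not immediately destroy it, its forward iterates can populate regions far from $X_a$, so $X_b\not\subseteq(X_a)_{\varepsilon}$ and semi-continuity can fail at $a$ from that side. Deciding whether these bad sides always lie outside $\mathcal V$ (so that the conjecture holds verbatim) or whether $\mathcal V$ must be shrunk to a generic subset appears to require a renormalization theory for double rotations on $\mathbb T^{2}$ — an analogue of Rauzy--Veech induction for interval exchanges and for double rotations of the circle — together with a proof that the induced maps remain in a compact family; this is related to why Theorem~\ref{ergodic_finite_theorem} needs $m=d+1$, and it is why the statement is offered as a conjecture rather than proved here.
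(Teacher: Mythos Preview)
The paper does not prove this statement: it is explicitly labeled a \emph{conjecture} and no argument is offered, so there is no paper proof to compare your proposal against.

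As a standalone attempt, your outline is a reasonable strategy and you are honest about its limits. The decomposition of $\mathbb{T}$ into level-$N$ itinerary cells, the observation that at a combinatorially stable parameter these cells and their translation vectors vary continuously in $(v_0,v_1)$, and the resulting inclusion $X_b\subseteq f_b^{N}(\mathbb{T})\subseteq (X_a)_\varepsilon$ are all sound, and together they do give upper semi-continuity at what you call regular parameters. Your genericity argument for regularity (polynomial degeneracy conditions, meager and null exceptional set) is also plausible, though one should be a bit careful: the integer $N=N(a)$ depends on $a$, and you are taking a countable union over $N$, so one needs that for each fixed $N$ the non-regular locus is nowhere dense and null \emph{within} the set $\{a:N(a)\le N\}$, not merely within all parameters.

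The genuine gap, which you identify yourself, is that the conjecture asks for semi-continuity at \emph{every} $a\in\mathcal{V}$, not merely at generic $a$. Your argument says nothing at a non-regular finite-type parameter, and the ``regluing'' phenomenon the paper mentions in Section~4 is exactly the scenario in which a thin sliver born from a degenerate cell persists and causes $X_b\not\subseteq (X_a)_\varepsilon$. Deciding whether such $b$ can themselves lie in $\mathcal{V}$ is the open point; your closing paragraph states this correctly, and that is precisely why the paper leaves the statement as a conjecture.
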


\begin{question}
Is it true that the attractor is continuous with respect to the parameters in $\mathcal{V}$?\\
\end{question}

\begin{figure}
	\centering
		\includegraphics[scale=1.7]{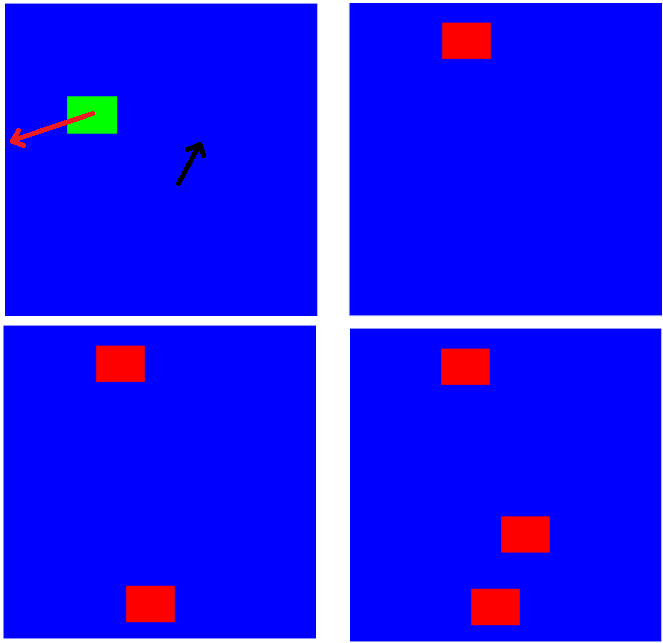}
		\captionsetup{justification=centering}
		\caption{Double rotation on a torus $\mathbb{T}$. The green rectangle \\indicates the smaller region. The blue parts represent $f^n(\mathbb{T})$ for $n=0,1,2,3$.}
		\label{fig:torus_rotation}
	\end{figure}

	\begin{figure}
	\centering
		\includegraphics[scale=1]{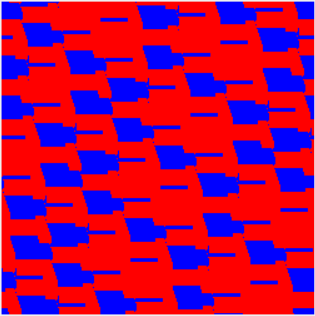}
		\captionsetup{justification=centering}
		\caption{The blue part is the attractor of the map in Figure \ref{fig:torus_rotation}.\\The attractor is attained after 500 iterates. The map is of finite type.}
		\label{fig:finite_attractor_torus}
	\end{figure}

	\begin{figure}
	\centering
		\includegraphics[scale=2.2]{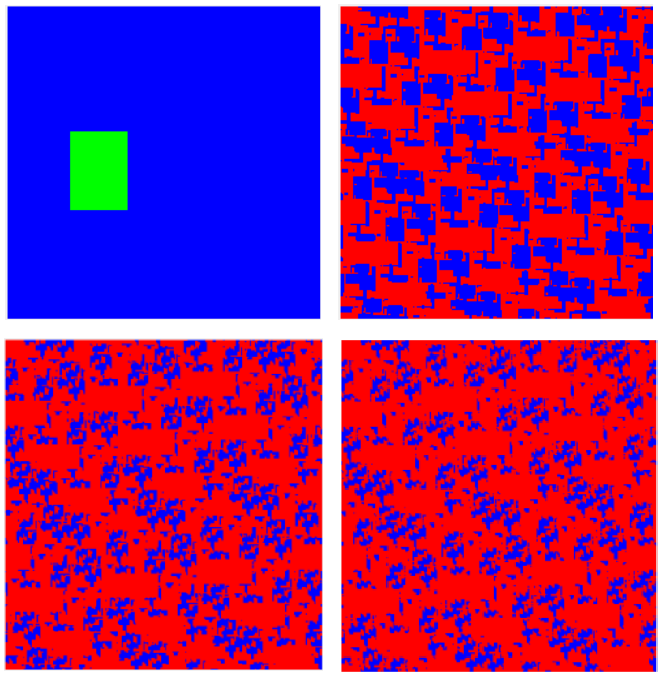}
		\captionsetup{justification=centering}
		\caption{Seemingly infinite type attractor.}
		\label{fig:infinite_attractor_torus}
	\end{figure}

\newpage	
\begin{center}\textbf{Acknowledgements} \end{center}
I would like to thank my advisor, Prof. Anton Gorodetski for the patient guidance, encouragement, and advice he provided during my research. I also want to thank UCI UROP for funding me during the summer of 2016 to finish this project.


	
\end{document}